\documentclass[12pt,a4paper]{article}

\setlength{\parindent}{0pt}
\setlength{\parskip}{0.6\baselineskip}

\usepackage[dutch,english]{babel}
\usepackage{amssymb, amsmath, color, fancyhdr, enumitem, amsthm, hyperref, graphicx, picins, array, tikz-cd, tikz}
\usetikzlibrary{arrows}
\usepackage[toc,page]{appendix}
\usepackage[a4paper, top=3.5cm, bottom=3.5cm, left=3.5cm, right=3.5cm]{geometry}
\usepackage[all]{xy}
\newtheorem{theorem}{Theorem}
\newtheorem*{theorem*}{Theorem}
\newtheorem{lemma}[theorem]{Lemma}
\newtheorem{proposition}[theorem]{Proposition}

\newtheorem*{corollary*}{Corollary}
\theoremstyle{definition}
\newtheorem{definition}[theorem]{Definition}
\newtheorem{remark}[theorem]{Remark}
\newtheorem{example}[theorem]{Example}
\newtheorem*{example*}{Example}
\newtheorem{setup}[theorem]{Set-up}
\renewcommand{\O}{\mathcal{O}}

\newcommand{\comment}[1]{}

\newcommand{\Z}{\ensuremath{\mathbb{Z}}}

\newcommand{\F}{\ensuremath{\mathbb{F}}}

\renewcommand{\P}{\ensuremath{\mathbb{P}}}

\renewcommand{\O}{\mathcal{O}}

\newcommand{\Spec}{\mathop\mathrm{Spec}}

\newcommand{\stacks}[1]{\textrm{\cite[\href{http://stacks.math.columbia.edu/tag/#1}{#1}]{stacks}}}

\relpenalty=10000
\binoppenalty=10000
\setlength{\headheight}{29pt}
\setlength{\headsep}{5pt}
\let\oldproof\proof
\def\proof{\oldproof\unskip}

\newcounter{nootje}
\setcounter{nootje}{1}
\renewcommand\check[1]{}

\begin{document}
\title{Inverse Galois problem for ordinary curves}
\author{Raymond van Bommel}
\date{}
\maketitle

{\bf Abstract.} We consider the inverse Galois problem over function fields of positive characteristic $p$, for example, the inverse Galois problem over the projective line. We describe a method to construct certain Galois covers of the projective line and other curves, which are ordinary in the sense that their Jacobian has maximal $p$-torsion. We do this by constructing Galois covers of ordinary semi-stable curves, and then deforming them into smooth Galois covers.

{\bf Keywords:} Curves, Covers, Deformation theory\\
{\bf Mathematics Subject Classification (2010):} 14H30, 14H25, 14H40, 14B07, 14G17, 11G20.

\section{Introduction}

In \cite{GlassPries}, Glass and Pries prove that there is an ordinary (in the sense that their Jacobians have maximal $p$-torsion) hyperelliptic curve of every genus in characteristic $p > 2$. Viewing hyperelliptic curves as $\Z/2\Z$-covers of the projective line $\P^1$, this leads to the following question: is it possible, for any finite group $G$, to construct an ordinary curve which is a Galois cover of $\P^1$ whose Galois group is $G$?

The inverse Galois problem over function fields has been studied extensively. In \cite{Harbater84} and \cite{Harbater87}, Harbater solved the problem for $\overline{\F}_p(t)$ and for $k(t)$ where $k$ is a complete ultrametric field (see also \cite[Sect.\ V.2]{IGT}). The problem is solved by explicitly constructing covers of $\P^1$ over $\overline{\F}_p$ or $k$ using rigid analytic methods. These methods, however, do not seem to give us a way to easily determine whether the curves constructed are ordinary. 

In this article, we will construct Galois covers which are ordinary using a different method: deformation theory. We will reprove the aforementioned result by Glass and Pries in a different way, that will also allow us to construct ordinary covers of $\P^1$ (and other curves) with other Galois groups.

For our method, we will consider Galois covers of ordinary semi-stable curves. We will then use deformation theory on semi-stable curves, \cite{DeligneMumford}, and in particular on curves with an action of a group, \cite{Saidi}, to smoothen the cover. Our key result in this direction is the following.

\begin{theorem*}\label{curvedeform} 
Let $G$ be a finite group and let $\gamma : C \rightarrow D$ be a Galois cover (cf.\ Def.\ \ref{def:Galoiscover}, p.\ \pageref{def:Galoiscover}), with Galois group $G$, of semi-stable curves over an algebraically closed field $k$ of characteristic $p$ coprime to the order of $G$. Moreover, suppose that for each $P \in C^{\mathrm{sing}}$ and each $\sigma \in \mathrm{Stab}(P) \subset G$, the determinant of the action of $\sigma$ on the tangent space at $P$ is $-1$ if $\sigma$ swaps the two branches of $C$ at $P$ and 1 otherwise. Then there exists a smoothening $\Gamma : \mathcal{C} \rightarrow \mathcal{D}$ of the cover $\gamma$, where $\mathcal{C}$ and $\mathcal{D}$ are semi-stable curves over $k[[X]]$, such that $\mathcal{C}$ is ordinary. In partiucular if $C$ is ordinary, then $\mathcal{C}$ is ordinary, respectively.
\end{theorem*}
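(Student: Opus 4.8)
The plan is to construct the deformation in two steps: first produce a smoothening of the cover as abstract semi-stable curves over $k[[X]]$, and then verify that the generic fibre of $\mathcal{C}$ inherits ordinarity. For the first step, I would work locally around each singular point $P \in C^{\mathrm{sing}}$. Since $p$ is coprime to $|G|$, the action of the stabiliser $\mathrm{Stab}(P)$ on the complete local ring at $P$ is linearisable, so étale-locally the node looks like $\Spec k[[u,v]]/(uv)$ with $\mathrm{Stab}(P)$ acting linearly on the two branches. The deformation-theoretic input of Deligne--Mumford and Saïdi provides a versal equivariant deformation of each such node, and the standard smoothing of a node $uv = 0$ is $uv = X$. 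The hypothesis on the determinant of the action on the tangent space at $P$ is exactly what I expect to be needed to make this smoothing $\mathrm{Stab}(P)$-equivariant: if $\sigma$ acts on $(u,v)$ by $(\alpha u, \beta v)$ then the product $uv$ is scaled by $\alpha\beta = \det$, so requiring $\det = 1$ (or $\det = -1$ in the branch-swapping case, where $\sigma$ sends $uv \mapsto (\text{const})\,vu$) guarantees that $X$ can be chosen as a $\mathrm{Stab}(P)$-invariant deformation parameter, and hence that these local smoothings glue to a genuine $G$-equivariant deformation $\Gamma : \mathcal{C} \to \mathcal{D}$ over $k[[X]]$.

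Once the equivariant smoothening exists, the quotient $\mathcal{C}/G = \mathcal{D}$ is automatically a semi-stable deformation of $D$, and $\Gamma$ is a Galois cover of the generic fibre. The remaining task is the ordinarity statement. I would analyse the $p$-torsion of the Jacobian by a semicontinuity or specialisation argument: the $p$-rank is lower semicontinuous in families, so it suffices to exhibit the special fibre $C$ as already ordinary, or to compute the $p$-rank of the generic fibre directly. For a semi-stable curve, the $p$-rank equals the sum of the $p$-ranks of the normalisations of the components plus the first Betti number of the dual graph (the toric part contributes maximally). Since smoothing a node decreases the number of nodes by one while potentially increasing the genus of the components, I would track the total genus and verify that the generic fibre's $p$-rank matches its genus precisely when $C$ is ordinary to begin with — the toric contributions from the dual graph of $C$ become part of the good-reduction $p$-torsion of $\mathcal{C}$.

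The main obstacle I anticipate is the precise bookkeeping in the ordinarity argument, rather than the construction of the deformation. Concretely, one must show that the maximal $p$-torsion of $C$ (which for a semi-stable curve is a combination of ordinary abelian-variety parts and a maximal toric part coming from $\mathrm{H}^1$ of the dual graph) deforms to maximal $p$-torsion of the honestly smooth generic fibre. This requires understanding how the $p$-divisible group of the Jacobian behaves under the semi-stable degeneration, for which I would invoke the theory of Néron models and the semistable reduction of the associated Barsotti--Tate group: the toric part of $\mathrm{Jac}(C)$ contributes to the étale-multiplicative (i.e.\ ordinary) part of $\mathrm{Jac}(\mathcal{C}_\eta)$, so no $p$-rank is lost in passing to the generic fibre. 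Making this last identification rigorous, and in particular checking that the deformation $\mathcal{C}$ can be arranged so that its special fibre's ordinarity (when $C$ is ordinary) is genuinely inherited, is where I would expect to spend most of the effort.
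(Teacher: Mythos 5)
Your proposal follows essentially the same route as the paper: the equivariant local smoothing $uv = X$ of each node, made $\mathrm{Stab}(P)$-equivariant precisely by the determinant hypothesis and glued via the Deligne--Mumford/Sa\"{\i}di deformation theory, is exactly the paper's Proposition on cover deformation, and the ordinarity of the generic fibre is likewise obtained from lower semicontinuity of the $p$-rank together with the fact that a semi-stable curve is ordinary exactly when its components are. The only difference is one of emphasis: the bookkeeping you anticipate as the main obstacle is dispatched in the paper in a single line by the semicontinuity citation, so your appeal to N\'eron models and Barsotti--Tate groups is heavier machinery than is actually needed.
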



For example, using a configuration of copies of $\P^1$, whose associated graph is an $n$-gon, we will realise the dihedral group $D_n$.
\begin{example*}[Ex.\ \ref{example:ngon}, p.\ \pageref{example:ngon}]
Let $n$ be a positive integer and let $p$ be a prime number not dividing $2n$. Then there exists a Galois cover $C \rightarrow \P^1$ with group $D_n$ such that $C$ is a smooth ordinary curve over $\overline{\F}_p$.
\end{example*}


In section 2, we will treat the definitions necessary to extend the notion of ordinarity to semi-stable curves. In section 3, we will look at graphs associated to semi-stable curves and their quotients by group actions. In section 4, we show how to deform these singular curves to smooth curves, while preserving the group action. The proof of the main theorem will be given in section 5. Finally, in section 6, we will list some examples of groups for which we can construct ordinary Galois covers, using this method.

{\bf Acknowledgements.} The author wishes to thank his PhD advisors David Holmes and Fabien Pazuki, and also Bas Edixhoven, Maarten Derickx and Peter Koymans, for their thorough reading of this paper and their useful comments for improvements.

\section{Ordinarity of semi-stable curves}

Let us first recall the notion of semi-stable curves.

\begin{definition}
For an algebraically closed field $k$, a {\em semi-stable curve over $k$} is a scheme $C/k$ that is reduced, connected, one-dimensional, projective over $\Spec{k}$ (i.e.\ {\em an algebraic curve}) whose singular points are nodal, i.e.\ they are ordinary double points. More generally, for a scheme $S$, a scheme $C/S$ is said to be a {\em semi-stable curve over $S$} if $C_{s}$ is a semi-stable curve over $s$ for every geometric point $s$ of $S$.
\end{definition}

Let $k$ be an algebraically closed field of characteristic $p > 0$ and let $C / \Spec{k}$ be a semi-stable curve. Consider the map $\O_C \rightarrow \O_C$ raising all sections to the power $p$ and let $F : H^1(C, \O_C) \rightarrow H^1(C, \O_C)$ be the induced map on cohomology. This map satisfies the conditions of \cite[Cor., chap.\ 14, p.\ 143]{MumfordAV}, hence $H^1(C,\O_C)$ decomposes as the sum of a semisimple part and a nilpotent part. Now we use this to define the notion of ordinarity for semi-stable curves.

\begin{definition}[\textrm{\cite[sect.\ 15, p.\ 146--150]{MumfordAV}}]\label{defordinary}
Let $k$ be an algebraically closed field of characteristic $p > 0$. Let $C$ be a semi-stable curve over $k$ and, moreover, let $F : H^1(C,\O_C) \rightarrow H^1(C,\O_C)$ be the map induced by the absolute Frobenius.  Then $C$ is called {\em ordinary} if the semisimple rank of $F$ is maximal or, in other words, if the dimension of the semisimple part of $H^1(C,\O_C)$ is equal to $\dim_k(H^1(C,\O_C))$. More generally, for a scheme $S$, a semi-stable curve $C/S$ is called {\em ordinary} if $C_s$ is ordinary for every geometric point $s$ of $S$.
\end{definition}

\begin{remark}It is well-known that for smooth curves this definition coincides with the classical definition, i.e.\ a smooth curve is ordinary of the $p$-torsion of the Jacobian of the curve is maximal.\end{remark}

An easy well-known corollary of \cite[Cor., chap.\ 14, p.\ 143]{MumfordAV} is the following lemma.

\begin{lemma}\label{ordisom}
Let $C / \Spec{k}$ and $F$ be as before. Then $C$ is ordinary if and only if $F$ is an isomorphism. 
\end{lemma}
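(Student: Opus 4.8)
The plan is to read the statement directly off the semisimple--nilpotent decomposition supplied by \cite[Cor., chap.\ 14, p.\ 143]{MumfordAV}. Write $V = H^1(C,\O_C)$, a finite-dimensional $k$-vector space, and recall that $F$ is a $p$-linear (Frobenius-semilinear) endomorphism of $V$. The cited corollary produces a canonical $F$-stable decomposition $V = V_s \oplus V_n$, where $F$ restricts to a bijection on the semisimple part $V_s$ and to a nilpotent endomorphism on the nilpotent part $V_n$; concretely one may take $V_s = \bigcap_{n \geq 0} F^n(V)$ and $V_n = \bigcup_{n \geq 0} \ker(F^n)$. By Definition \ref{defordinary}, $C$ is ordinary precisely when $\dim_k V_s = \dim_k V$, equivalently when $V_n = 0$.

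First I would observe that, because the decomposition is $F$-stable and $F|_{V_s}$ is already bijective, the map $F$ is bijective on $V$ if and only if its restriction $F|_{V_n}$ is bijective. Next, since $F|_{V_n}$ is nilpotent, it is bijective if and only if $V_n = 0$: a nilpotent endomorphism of a nonzero finite-dimensional space has nontrivial kernel. Combining these two equivalences shows that $F$ is an isomorphism if and only if $V_n = 0$, i.e.\ if and only if $C$ is ordinary, which is exactly the claim.

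There is essentially no obstacle here beyond bookkeeping; the only point requiring a word of care is that $F$ is $p$-linear rather than $k$-linear, so ``isomorphism'' must be read as bijectivity of the underlying additive map. Since $k$ is algebraically closed, hence perfect, the Frobenius twist affects neither the rank nor the kernel computations, and the Fitting-type decomposition of the cited corollary applies verbatim to the $p$-linear $F$.
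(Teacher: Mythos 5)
Your argument is correct and is exactly the one the paper has in mind: the paper gives no written proof, merely asserting the lemma is an easy consequence of the cited semisimple--nilpotent decomposition of Mumford, and your write-up simply supplies the omitted details (bijectivity reduces to the nilpotent summand, which must therefore vanish). The remark about $F$ being $p$-linear rather than $k$-linear is a sensible precaution and does not affect the conclusion.
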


The goal of this section is to prove that a semi-stable curve is ordinary (in the sense of Definition \ref{defordinary}) if and only if all irreducible components of its normalisation are.

\begin{proposition}\label{ordinarycomponents}
Let $C / k$ be a semi-stable curve over an algebraically closed field. Let $C_1, \ldots, C_n$ be the irreducible components of its normalisation. Then $C$ is ordinary if and only if for all $i = 1, \ldots, n$ the smooth curve $C_i$ is ordinary.
\end{proposition}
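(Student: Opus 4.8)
The plan is to relate $H^1(C, \O_C)$ to the cohomology of the normalisation $\tilde{C} = \coprod_i C_i$ via the normalisation sequence, and then show that the Frobenius-compatibility of this sequence forces the semisimple ranks to match up in the right way. The starting point is the short exact sequence of sheaves on $C$ coming from the normalisation morphism $\nu : \tilde{C} \to C$, namely
\[
0 \longrightarrow \O_C \longrightarrow \nu_* \O_{\tilde{C}} \longrightarrow \mathcal{S} \longrightarrow 0,
\]
where $\mathcal{S}$ is a skyscraper sheaf supported on the singular (nodal) points of $C$, with one-dimensional stalk at each node (the quotient measuring the gluing of the two branches). Taking the long exact sequence in cohomology and using that $\mathcal{S}$ is a skyscraper (so $H^1(C, \mathcal{S}) = 0$) gives
\[
0 \to H^0(C, \O_C) \to H^0(\tilde{C}, \O_{\tilde{C}}) \to H^0(C, \mathcal{S}) \to H^1(C, \O_C) \to H^1(\tilde{C}, \O_{\tilde{C}}) \to 0.
\]
The key structural point is that this entire sequence is equivariant for the absolute Frobenius $F$, since $\nu$ and the power-$p$ map are functorial. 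The crucial observation is that $F$ acts on the skyscraper contribution $H^0(C, \mathcal{S})$ — which is spanned by the differences of values on the two branches at each node — and that this action is bijective (raising to the $p$-th power is bijective on the residue field $k$, and on the combinatorial/toric part Frobenius acts as an isomorphism). So the "extra" part of $H^1(C, \O_C)$ coming from the nodes is entirely semisimple.

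**Carrying out the rank comparison.**

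Next I would split $H^1(C, \O_C)$ compatibly with $F$ using the Mumford semisimple-nilpotent decomposition (from \cite[Cor., chap.\ 14]{MumfordAV}, invoked in the excerpt). By Lemma \ref{ordisom}, $C$ is ordinary iff $F$ is an isomorphism on $H^1(C, \O_C)$, and likewise each $C_i$ is ordinary iff $F$ is an isomorphism on $H^1(C_i, \O_{C_i})$, so that $F$ is an isomorphism on $H^1(\tilde{C}, \O_{\tilde{C}}) = \bigoplus_i H^1(C_i, \O_{C_i})$ iff every $C_i$ is ordinary. The surjection $H^1(C, \O_C) \twoheadrightarrow H^1(\tilde{C}, \O_{\tilde{C}})$ with kernel equal to the image of $H^0(C, \mathcal{S})$ is $F$-equivariant, giving a short exact sequence of $F$-modules
\[
0 \to K \to H^1(C, \O_C) \to H^1(\tilde{C}, \O_{\tilde{C}}) \to 0,
\]
where $K$ is a quotient of $H^0(C, \mathcal{S})$ on which $F$ acts bijectively. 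The argument then reduces to the linear-algebra fact that $F$ is an isomorphism on the middle term if and only if it is an isomorphism on both $K$ and the quotient; since $F$ is automatically bijective on $K$, this says precisely that $C$ is ordinary iff $\tilde{C}$ is ordinary, i.e.\ iff all the $C_i$ are.

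**The main obstacle.**

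The part requiring the most care — and the main obstacle — is verifying that $F$ acts bijectively on the nodal contribution $K$, i.e.\ that the combinatorial/toric part of the cohomology is always ordinary. Concretely, $K$ can be identified (via the above sequence) with the first cohomology of the dual graph of $C$, i.e.\ with $H^1$ of the graph tensored with $k$, and one must check that Frobenius acts as an isomorphism there. This is where I would lean on the explicit local description of Frobenius on $\mathcal{S}$: on each node the gluing datum lives in $k^\times$-torsors / the multiplicative part, so Frobenius acts like $x \mapsto x^p$ up to the $k$-structure and is bijective because $k$ is algebraically closed and perfect. I would need to argue carefully that the connecting maps in the long exact sequence are compatible with these Frobenius actions and that no nilpotent part sneaks in from the graph cohomology; this is the technical heart, whereas the final dévissage is formal. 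An alternative, cleaner route would be to avoid the sheaf $\mathcal{S}$ entirely and instead combine Lemma \ref{ordisom} with the functoriality of $F$ directly on the five-term exact sequence, checking ordinarity component-by-component.
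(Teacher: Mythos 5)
Your proposal is correct and follows essentially the same route as the paper: the normalisation short exact sequence, the resulting five-term long exact sequence in cohomology, Frobenius-equivariance, bijectivity of the $p$-power map on the skyscraper/constant terms because $k$ is algebraically closed (hence perfect), Lemma \ref{ordisom}, and a five-lemma-type dévissage. The ``main obstacle'' you flag is resolved exactly as you suggest and as the paper does in one line: Frobenius acts componentwise as $x \mapsto x^p$ on $k$, $k^n$ and $k^{\mathrm{Sing}(C)}$, hence bijectively, so no nilpotent part can arise from the graph contribution.
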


\begin{proof}
Consider the exact sequence
$$0 \longrightarrow \O_C \longrightarrow \pi_* \O_{\widetilde{C}} \longrightarrow \pi_* \O_{\widetilde{C}} / \O_C \longrightarrow 0.$$
The sheaf $\pi_* \O_{\widetilde{C}} / \O_C$ is a direct sum of skyscraper sheaves, one for each singular point $P$ of $C$, having $k$ as stalk in $P$. As $\pi$ is an affine morphism, we have $H^i(C, \pi_* \mathcal{F}) = H^i(\widetilde{C}, \mathcal{F})$ for any quasi-coherent sheaf $\mathcal{F}$ of $\O$-modules on $\widetilde{C}$. Hence, for the associated long exact sequence we get
$$0 \longrightarrow k \longrightarrow k^n \longrightarrow k^{\mathrm{Sing}(C)} \longrightarrow H^1(C,\O_C) \longrightarrow \bigoplus_{i=1} ^n H^1(C_i, \O_{C_i}) \longrightarrow 0.$$
Now consider the action of Frobenius on this exact sequence. On $k, k^n$ and $k^{\mathrm{Sing}(C)}$ it acts componentwise, hence it induces an isomorphism. By Lemma \ref{ordisom}, on the one hand, Frobenius acts on $H^1(C,\O_C)$ as an isomorphism if and only if $C$ is ordinary. On the other hand, Frobenius acts on $H^1(C_i, \O_{C_i})$ as an isomorphism if and only if $C_i$ is ordinary. Now we can use the five-lemma to conclude the desired result.
\end{proof}

\section{Quotients of curves}

In this section, we will build a formalism for graphs associated to semi-stable curves. Given a semi-stable curve with a group $G$ acting on it, we want to build an action of $G$ on the associated graph, and define a good quotient in the category of graphs. The key property we need is that the graph of the quotient curve is naturally the quotient of the graph.
The following two examples will serve as motivation for the definition that follows.

Throughout this section $S$ will be $\Spec{k}$, the spectrum of an algebraically closed field.

\begin{example}\label{ex1:C3}
Let $C/S$ be a curve obtained by gluing three copies of $\P^1$ to form a triangle. We let the group $G = \Z/3\Z$ act on $C$ by cyclically permuting these curves. Then the quotient of $C$ by $G$ is one copy of $\P^1$ with two of its points glued together.
\end{example}

\begin{example}\label{ex2:S3}
Let $C/S$ be the same curve as in the previous example. We let the group $G = S_3$ act on $C$, by cyclically permuting and mirroring the whole curve in its three symmetry axes. Then the quotient of $C$ by $G$ is isomorphic to $\P^1$.
\end{example}

\begin{definition}[Graph]
A {\em graph} consists of the data of a set of vertices $V$, for every $v \in V$ a set $E_v$ of edge ends, an element\footnote{The elements $\emptyset_v$ of the $E_v$ may seem to be a bit out of place. The reason they exists is because morphisms of semi-stable curves can map singular points to smooth points. This will be clarified further in Remark \ref{emptysetpt}.} $\emptyset_v \in E_v$, and an involution $n : \bigsqcup_v E_v \rightarrow \bigsqcup_v E_v$, giving the opposite of each edge end, whose only fixed points are the $\emptyset_v$.

A morphism of graphs from $(V, (E_v)_v, n_V)$ to $(W, (E_w)_w, n_W)$ consists of a morphism $\varphi : V \rightarrow W$ and morphisms $\psi_v : E_v \rightarrow E_{\varphi(v)}$ for every $v \in V$, gluing to a morphism $\psi : \bigsqcup_v E_v \rightarrow \bigsqcup_w E_w$ such that $\psi \circ n_V =  n_W \circ \psi$.
\end{definition}

\begin{remark}
Colimits exist in the category of graphs. For a diagram of graphs the colimit is constructed as follows. The set of vertices $V$ is the colimit of the sets of vertices in the diagram. For each vertex $v \in V$, we consider the collection of vertices $w$ in the original diagram that map to $v$. Its sets of edge ends form a diagram $(E_w)_{w \mapsto v}$. Then we take the colimit $E_v'$ of this diagram, together with the map $n_V': E_v' \rightarrow E_v'$ obtained by gluing the $n_W$. Now $E_v$, the set of edge ends for $v$, is obtained from $E_v'$ by identifying all edge ends which are fixed points of $n_V'$. The element $\emptyset_v$ is the image of $\emptyset_w$ for any $w$ mapping to $v$. The morphism $n_V$ is directly obtained from $n_V'$. One can easily check that this indeed defines a graph and that this is the colimit of the diagram.

In particular, for a graph $\Gamma$ equipped with an action of a group $G$, the quotient graph $\Gamma/G$ can be shown to exist by taking the colimit of the diagram whose unique object is $\Gamma$ and whose arrows are given by the elements of $G$.
\end{remark}
 
\begin{definition}[Associated graph of a semi-stable curve]
Let $C$ be a semi-stable curve over $S$. Then we can associate to it a graph, whose vertices are the irreducible components. Besides the $\emptyset_v$ for each vertex $v$, there are also two edge ends for each singular point $P$: one for each of the two (possibly the same) components  that are intersecting there. The involution $n$ swaps the two edge ends corresponding to $P$.
\end{definition}

\begin{example}
The graph $\Gamma(C)$ corresponding to the curve $C$ from Examples \ref{ex1:C3} and \ref{ex2:S3} consists of three vertices $C_1, C_2$ and $C_3$. The set of edge ends for $C_i$ is $\{C_{i,1}, C_{i,2}, C_{i,3}\}$, where $\emptyset_i = C_{i,i}$ and $n(C_{i,j}) = C_{j,i}$. In both cases, the graph associated to the quotient curve (a single point with a loop, and a single point, respectively) is the quotient of $\Gamma(C)$ by the action of $\Z/3\Z$ and $S_3$, respectively.
\end{example}

\begin{proposition} \label{sschar}
Let $C/S$ be a possibly singular reduced curve. Let $\pi: \widetilde{C} \rightarrow C$ be its normalisation. Then $C$ is semi-stable if and only if there exist two maps $\ell_1, \ell_2: L \rightarrow \widetilde{C}$ from the singular locus of $C$ to $\widetilde{C}$ such that $\pi \circ \ell_1 = \pi \circ \ell_2$ is the inclusion of the singular locus in $C$ and $C$ is the colimit of the following diagram with these two arrows: $$L \rightrightarrows \widetilde{C}.$$
\end{proposition}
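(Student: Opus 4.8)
The plan is to check the statement locally at the finitely many singular points, reducing it to a single computation with the local rings of the two branches of a node. Away from the singular locus $L = C^{\mathrm{sing}}$ the normalisation $\pi$ is an isomorphism, so neither condition says anything over $C \setminus L$, and it suffices to understand what happens over each $P \in L$. Moreover, the coequalizer of the parallel pair $\ell_1, \ell_2 : L \rightrightarrows \widetilde{C}$ exists as a scheme and is computed locally: on a small affine $\Spec B \subseteq \widetilde{C}$ it is $\Spec A$, where $A = \{\, b \in B : \ell_1^\ast b = \ell_2^\ast b \text{ in } \O_L \,\}$ is the equalizer of the two pullback maps. Since $\widetilde{C}$ is the normalisation of a reduced curve over an algebraically closed field it is smooth, so for $P \in L$ with $\pi^{-1}(P) = \{Q_1, \dots, Q_r\}$ each $\O_{\widetilde{C}, Q_i}$ is a discrete valuation ring, and I will detect nodality through the completion, using that $P$ is a node exactly when $\widehat{\O}_{C,P} \cong k[[x,y]]/(xy)$.

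The crux is the following local identity: if $Q_1 \neq Q_2$ are two smooth points, then the equalizer $A = \{ (f_1,f_2) : f_1(Q_1) = f_2(Q_2) \} \subseteq \O_{\widetilde{C},Q_1} \times \O_{\widetilde{C}, Q_2}$ of the two evaluation maps to $k$ satisfies $\widehat{A} \cong k[[x,y]]/(xy)$, i.e.\ it is a node; this is the short computation sending $x \mapsto (t_1,0)$ and $y \mapsto (0,t_2)$. For the forward direction, assume $C$ is semi-stable, so each $P \in L$ is a node with exactly two branches $Q_1 \neq Q_2$; set $\ell_1(P) = Q_1$ and $\ell_2(P) = Q_2$, which visibly satisfy $\pi \circ \ell_1 = \pi \circ \ell_2 = \iota_L$. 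To see that $C$ with the cocone $\pi$ is the colimit it is enough, by the local description above, to check $\O_{C,P} = A$ at each $P$; but normalisation gives inclusions $\O_{C,P} \subseteq A \subseteq \O_{\widetilde{C}, Q_1} \times \O_{\widetilde{C}, Q_2}$, and since the delta-invariant of a node is $1 = \dim_k\big( (\O_{\widetilde{C},Q_1}\times \O_{\widetilde{C},Q_2})/A \big)$, both indices are forced to equal $1$ and the two rings coincide.

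For the converse, suppose $\ell_1, \ell_2$ exist and $C$ together with $\pi$ is the colimit of $L \rightrightarrows \widetilde{C}$. Fix $P \in L$ with $\pi^{-1}(P) = \{Q_1,\dots,Q_r\}$. Because the coequalizer only identifies $\ell_1(P)$ with $\ell_2(P)$, any branch $Q_i$ not among these would be carried to a closed point of the colimit distinct from the image of $P$, contradicting $\pi(Q_i) = P$; hence $r \leq 2$ and $\{Q_1,\dots,Q_r\} = \{\ell_1(P), \ell_2(P)\}$. If $\ell_1(P) = \ell_2(P)$ then $A = \O_{\widetilde{C}, Q_1}$ is regular and $P$ would be a smooth point of $C$, contradicting $P \in C^{\mathrm{sing}}$; therefore $\ell_1(P) \neq \ell_2(P)$, and the local identity gives $\widehat{\O}_{C,P} = \widehat{A} \cong k[[x,y]]/(xy)$, so $P$ is a node. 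As $P$ was arbitrary, $C$ is semi-stable. I expect the main obstacle to be the bookkeeping around the colimit in the category of schemes — justifying that it exists, is computed locally as the equalizer $\Spec A$ (Ferrand-type gluing), and that its universal property forces exactly two branches over each singular point — whereas the ring-theoretic heart, identifying the fibre product of two smooth branches with a node, is essentially a one-line calculation.
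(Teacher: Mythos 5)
Your proof is correct and follows the same route as the paper: in the forward direction you order the two preimages of each node to obtain $\ell_1,\ell_2$ and verify the colimit property, and in the converse you show there are exactly two preimages of each singular point and that the resulting singularity is nodal. The only real difference is that the paper outsources the local statement (gluing two smooth branches produces an ordinary double point, and conversely) to \cite[Prop.\ 7.5.15, p.\ 310]{Liu}, whereas you prove it directly via the equalizer ring $A$ and the computation $\widehat{A}\cong k[[x,y]]/(xy)$, together with the delta-invariant count forcing $\O_{C,P}=A$.
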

\begin{proof}
If $C$ is semi-stable, then there lie two points above each singular point of $C$ and by ordering them for each singular point, we get two maps $\ell_1$ and $\ell_2$ as required. Then by \cite[Prop.\ 7.5.15, p.\ 310]{Liu}, $C$ is the colimit of the diagram.

If $C$ is such a colimit, then above each singular point of $C$ there are one or two points. If there is only one, then $C$ would be smooth at that point, which is not the case, hence there are two. Then by \cite[Prop.\ 7.5.15, p.\ 310]{Liu} the singularities are nodal and we are done.
\end{proof}

\begin{proposition}\label{quotientofcurves}
Let $C/S$ be a semi-stable curve. Let $G \subset \mathrm{Aut}_S(C)$ be a finite group. Let $\Gamma$ be the graph associated to $C$. Then there exists a semi-stable curve $D/S$ that is a categorical quotient of $C$ by $G$ in the category of schemes. Its associated graph is $\Gamma/G$.
\end{proposition}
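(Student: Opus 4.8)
The plan is to construct $D$ as the quotient scheme $C/G$, verify it is semi-stable by a local computation at the nodes, and then match its associated graph with $\Gamma/G$. First I would establish existence: since $C$ is projective over $k$, it is quasi-projective, so every $G$-orbit is contained in a $G$-invariant affine open $\Spec A$; gluing the affine quotients $\Spec A^G$ produces a scheme $D$ together with a finite morphism $q : C \rightarrow D$ which is the categorical (indeed geometric) quotient of $C$ by $G$ in the category of schemes. As a quotient of a reduced, connected, one-dimensional, projective curve by a finite group, $D$ inherits each of these properties, so the only thing to check for semi-stability is that its singularities are nodal.

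The action of $G$ on $C$ lifts, by the universal property of normalisation, to an action on $\widetilde{C}$, and the induced finite morphism $\widetilde{C}/G \rightarrow D$ is birational, being an isomorphism over the dense open image of the smooth locus of $C$. Since its source is normal, $\widetilde{D} := \widetilde{C}/G$ is the normalisation of $D$. Moreover the quotient of a smooth curve by a finite group is again smooth (a normal one-dimensional scheme is regular), so $\widetilde{D}$ is a disjoint union of smooth projective curves whose components are exactly the $G$-orbits of the components of $\widetilde{C}$.

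Next I would prove that $D$ is semi-stable by analysing completed local rings. For $Q \in D$ with a preimage $P \in C$ and stabiliser $H = \mathrm{Stab}(P)$, one has the standard description $\widehat{\O}_{D,Q} \cong (\widehat{\O}_{C,P})^{H}$ for a finite quotient. If $P$ is a smooth point, then $\widehat{\O}_{C,P} \cong k[[t]]$ and the invariants form a power series ring in one variable, so $Q$ is smooth. If $P$ is a node, then $\widehat{\O}_{C,P} \cong k[[x,y]]/(xy)$. When no element of $H$ swaps the two branches, $H$ acts on each branch separately and the invariant ring is again of the form $k[[u,v]]/(uv)$, a node. When some element swaps the branches, passing first to the index-two branch-fixing subgroup gives a node with coordinates $u,v$, and the residual involution identifies $u \leftrightarrow v$; since $uv = 0$ we have $u^m + v^m = (u+v)^m$, so the full invariant ring is the power series ring $k[[u+v]]$ and $Q$ is smooth. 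As the local ring at $Q$ depends only on a single orbit representative, no singularities worse than nodes can arise, so $D$ is semi-stable; equivalently, this can be packaged by applying Proposition \ref{sschar} to $\widetilde{D}$ with the induced gluing data.

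Finally I would identify $\Gamma(D)$ with $\Gamma/G$. The vertices of $\Gamma(D)$ are the components of $\widetilde{D}$, that is the $G$-orbits of vertices of $\Gamma$, matching the vertex set of the colimit $\Gamma/G$. The nodes of $D$ are precisely the $G$-orbits of nodes of $C$ whose stabiliser does not swap the branches; these correspond to the edges of $\Gamma/G$, whereas the branch-swapping orbits collapse to smooth points, matching exactly the edge ends that become fixed points of the induced involution and are identified with the $\emptyset_v$ in the construction of $\Gamma/G$. Tracking the two edge ends of each surviving node through the quotient shows the involution on $\Gamma(D)$ agrees with that on $\Gamma/G$. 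The step I expect to be the main obstacle is the local analysis at the nodes: justifying the identification $\widehat{\O}_{D,Q} \cong (\widehat{\O}_{C,P})^{H}$ cleanly, so that forming invariants commutes with completion, and carrying out the branch-swapping computation carefully enough to guarantee only nodal singularities appear, including in the wild case where $p$ divides $|H|$; the bookkeeping matching the collapse of branch-swapped edge ends to $\emptyset_v$ is then routine.
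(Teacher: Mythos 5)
Your argument is correct in outline but takes a genuinely different route from the paper. You build $D$ directly as the scheme quotient $C/G$ by gluing the affine invariant rings, so the categorical-quotient property comes essentially for free, and you then have to verify semi-stability by a local computation at the nodes. The paper goes the other way around: it writes $C$ as the colimit of a gluing diagram $L \rightrightarrows \widetilde{C}$ (Proposition \ref{sschar}), quotients the normalisation $\widetilde{C}$ by $G$ using \cite[Thm.\ 1, chap.\ 12]{MumfordAV}, re-glues along $q\circ\ell_1$, $q\circ\ell_2$ to \emph{define} $D$, and then checks the universal property by a diagram chase; semi-stability is then automatic from Proposition \ref{sschar} and no completed local rings ever appear. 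What your route buys is an explicit description of $\widehat{\O}_{D,Q}$, which the paper in any case needs later (Remark \ref{localdescriptioncover}); what it costs is the step you flag as the main obstacle. That step does go through, and is worth spelling out since the proposition, unlike Set-up \ref{setupquotcurve}, does \emph{not} assume $|G|$ coprime to $p$: the identification $\widehat{\O}_{D,Q}\cong(\widehat{\O}_{C,P})^{H}$ holds because $\O_C$ is finite over $\O_D=\O_C^G$, completion is exact on finite modules and so commutes with the kernel defining invariants, and the completed semi-local ring over $Q$ is $\prod_{g\in G/H}\widehat{\O}_{C,gP}$ permuted by $G$; and none of your three local computations needs tameness (the invariants of $k[[t]]$ form a complete normal one-dimensional local ring, hence a DVR; taking invariants commutes with the fibre product $k[[x]]\times_k k[[y]]$ presenting the node; and the branch-swapping invariants are $k[[u+v]]$ in every characteristic, including $p=2$). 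Your graph bookkeeping also matches the paper's convention that branch-swapped edge ends collapse into $\emptyset_v$, so the identification $\Gamma(D)\cong\Gamma/G$ is handled equivalently in both proofs.
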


\begin{proof}
Let $\pi : \widetilde{C} \rightarrow C$ be the normalisation of $C$. For each singular point $p \in C$ choose an ordering on the two points in $\pi^{-1}(p)$. Let $L$ be the union of the singular points of $C$. Then the ordering chosen gives us two maps $\ell_1, \ell_2: L \rightarrow \widetilde{C}$ mapping $p$ to the first respectively second point in $\pi^{-1}(c)$. Observe, cf.\ Proposition \ref{sschar}, that $C$ is the colimit of the diagram 
$$L \rightrightarrows \widetilde{C},$$
where the two maps are $\ell_1$ and $\ell_2$.

The universal property of the normalisation gives us an extension of the action of $G$ on $C$ to $\widetilde{C}$. Now the irreducible components of $\widetilde{C}$ are connected and $G$ permutes them. Using \cite[Thm.\ 1, chap.\ 12, p.\ 111]{MumfordAV}, we see that the categorical quotient $q : \widetilde{C} \rightarrow \widetilde{D}$ of $\widetilde{C}$ by the group action of $G$ identifies components sent to each other and each such component is replaced by the quotient of that component by the action of its stabiliser. Now we will recall the standard argument to prove that these quotients of the components are smooth curves.

As a subring of an integral ring is integral, the quotients of the components are integral. Furthermore, they are normal, as for an integrally closed domain $A$ with an action of a group $G$, the ring $A^G$ is integrally closed. As $A$ is an integral extension of $A^G$, it has the going-up property and $A^G$ is noetherian and of dimension 1. Hence, the quotients of the components are normal integral of dimension 1. Hence, the quotients of the components of $\widetilde{C}$ are smooth curves, see also \stacks{0BX2}.

Now we will re-glue the points. Let $D$ be the colimit of the diagram 
$$L \rightrightarrows \widetilde{D},$$
where the two maps are $q \circ \ell_1$ and $q \circ \ell_2$. Let $\zeta: \widetilde{D} \rightarrow D$ be the natural map. Now, we will prove that this is also a categorical quotient of $C$ by $G$.

$$\xymatrix{
L \ar@/^/[r]^{\ell_1} \ar@/_/[r]_{\ell_2}& \widetilde{C} \ar[d]^{\pi} \ar[r]^q &\widetilde{D} \ar[d]^{\zeta}\ar@/^5pc/@{-->}[ddl]^\rho \\
&C \ar@{-->}[r]^Q \ar[d]^\psi &D \ar@{-->}[dl]^{\nu} \\ &T
}$$

First of all, the maps $\zeta \circ q \circ \ell_1$ and $\zeta \circ q \circ \ell_2$ are the same by construction. Hence, there is a map $Q: C \rightarrow D$, such that $Q \circ \pi = \zeta \circ q$.

Let $T$ be a test scheme with a trivial action of $G$ and $\psi : C \rightarrow T$ be a $G$-equivariant map.  Then the map $\psi \circ \pi: \widetilde{C} \rightarrow T$ is also $G$-equivariant and gives rise to a unique map $\rho : \widetilde{D} \rightarrow T$, such that $\rho \circ q = \psi \circ \pi$. Now we have $$\rho \circ q \circ \ell_1 = \psi \circ \pi \circ \ell_1 = \psi \circ \pi \circ \ell_2 =\rho \circ q \circ \ell_2,$$ and hence by the universal property of the colimit there is a unique map $\nu: D \rightarrow T$ such that $\nu \circ \zeta = \rho$. Now the compositions $\psi \circ \pi$ and $\nu \circ \zeta \circ q = \nu \circ Q \circ \pi$ are equal. Since the normalisation is surjective on schemes and injective on the underlying rings, it is an epimorphism. This yields $\psi = \nu \circ Q$ as desired.

Now the statement follows by using Proposition \ref{sschar}.
\end{proof}

\begin{remark}\label{emptysetpt}
These methods can also be generalised to the case of marked semi-stable curves. The distinguished edge ends $\emptyset_v$ should be removed, and instead marked points should correspond to fixed points of the involution $n$ on the edge ends.
Furthermore, it is necessary to require that the marked points are smooth and that morphisms map singular points to singular or marked points.
\end{remark}

\section{Deformations of $G$-covers}

In this section, we will treat deformations of Galois covers. Let us first define this notion.

\begin{definition}[Galois cover] \label{def:Galoiscover}
Let $f: X \rightarrow Y$ be a morphism of noetherian schemes or noetherian formal schemes, and let $G$ be (isomorphic to) a finite group of automorphisms of $X$. Then $f$ is called a {\em Galois cover with group $G$} if:\\[-0.9cm]
\begin{itemize}\itemsep0pt
\item $Y$ as a topological space is the quotient of $X$ by $G$,
\item for each open $U \subset Y$ we have $\O_Y(U) \cong \O_X(f^{-1}(U))^G$, and
\item each $g \in G \setminus \{\mathrm{id}\}$ does not act as the identity on any of the irreducible components of $X$.
\end{itemize}
\end{definition}

\begin{setup} \label{setupquotcurve}
Let $C$ be a semi-stable curve over $S = \Spec{k}$, the spectrum of an algebraically closed field. Let $G \subset \mathrm{Aut}_S(C)$ be a finite group of order coprime to $p := \mathop\mathrm{char}{k}$ such that for each component of $C$ the stabiliser acts faithfully on the component. Let $D$ be the quotient of $C$ by $G$ (as in Proposition \ref{quotientofcurves}). Then $\gamma: C \rightarrow D$ is a Galois cover with group $G$.
\end{setup}

\begin{remark} \label{localdescriptioncover}
Suppose we are in the setting of Set-up \ref{setupquotcurve}. Let $Q \in C^{\mathrm{sing}}$ be a node and let $P \in D$ be its image $\gamma(Q)$. Then $P$ could be either singular or smooth. We will describe the local structure of the covering $\gamma$ in both cases.

\begin{itemize}
\item If $P \in D^{\mathrm{sing}}$, then we know that $\widehat{\O}_{C,Q} \cong k[[a,b]]/(ab)$ and moreover $\widehat{\O}_{D,P} \cong k[[s,t]]/(st)$ (cf.\ \cite[Prop. 5.15, p.\ 310]{Liu}) and, swapping the variables if necessary, one easily checks that the original cover is given by
$$\widehat{\gamma}_Q : k[[s,t]]/(st) \rightarrow k[[a,b]]/(ab) : \quad s \mapsto a^m \cdot u, \quad t \mapsto b^\ell \cdot v,$$ for some integers $m$ and $\ell$ and units $u \in k[[a]]$ and $v \in k[[b]]$. The stabiliser $\mathrm{Stab}(Q) \subset G$ acts faithfully on both components through $Q$. Hence, we find that $k((a))/k((s))$ and $k((b))/k((t))$ are Galois extensions of degree $m = |\mathrm{Stab}(Q)| = \ell$. In particular, $m$ is not divisible by $p$ and, by Hensel's lemma, $u$ and $v$ are $m$-th powers in $k[[a,b]]/(ab)$ and we may and will assume, by composing with an automorphism if necessary, that $u=v = 1$.

In particular, the group $\mathrm{Stab}(Q)$ is cyclic and acts on $a$ and $b$ by multiplication with $m$-th roots of unity.

\item
In the case that $P$ is a smooth point of $D$, the local cover $\widehat{\gamma}_Q$ is given by
$$\widehat{\gamma}_Q : k[[s]] \rightarrow k[[a,b]]/(ab) : \quad s \mapsto a^m \cdot u + b^{\ell} \cdot v,$$ for some integers $m$ and $\ell$ and units $u \in k[[a]]$ and $v \in k[[b]]$. Analogous to the first case, we get that $m = \ell = \tfrac12 \cdot |\mathrm{Stab}(Q)|$ and we may assume that $u, v = 1$.

The group $\mathrm{Stab}(Q)$ is either cyclic, generated by an element $g$ swapping the two components, such that $g^2$ acts by multiplication with primitive $m$-th roots of unity, or $\mathrm{Stab}(Q)$ is isomorphic to $\Z/2\Z \times \Z/m\Z$, where the first factor is acting by swapping the two components and the second one by multiplication with $m$-th roots of unity.
\end{itemize}
\end{remark}

In the part that follows, we will assume that the action of $G$ is orientation preserving in the following sense.

\newcommand{\Stab}{\mathrm{Stab}}
\begin{definition}\label{deforientationpreserving}
Let $k$ be an algebraically closed field and let $C$ be a semi-stable curve over $\Spec{k}$. Let $G \subset \mathrm{Aut}_k(C)$ be a finite group. For every $P \in C^{\mathrm{sing}}$, the subgroup $\Stab(P)$ acts on the completed local ring $(\widehat{\O}_{C,P}, \mathfrak{m}_P)$ of $C$ at $p$, and on its cotangent space $\mathfrak{m}_P / \mathfrak{m}_P^2$. The action of $G$ on $C$ is called {\em orientation preserving at $P$}, if for each $\sigma \in \Stab(P)$, this action of $\sigma$ on $\mathfrak{m}_P / \mathfrak{m}_P^2$ has determinant $\mathrm{sgn}_P(\sigma)$, where $\mathrm{sgn}_P(\sigma)$ is the sign of the action of $\sigma$ on the set consisting of the two edge ends of $\Gamma(C)$ corresponding to $P$. The action of $G$ on $C$ is called {\em orientation preserving} if it is orientation preserving at every $P \in C^{\mathrm{sing}}$.
\end{definition}

\begin{remark}
As we saw in Remark \ref{localdescriptioncover}, this condition is easily seen to be verified in case $\mathrm{Stab}(P)$ is isomorphic to either $1, \Z/2\Z$ or $(\Z/2\Z) \times (\Z/2\Z)$ for each $P \in C^{\mathrm{sing}}$. In these cases, the action of $\mathrm{Stab}(P)$ on $\widehat{\O}_{C,P} \cong k[[a,b]]/(ab)$ only involves swapping $a$ and $b$ and multiplying them simultaneously by $-1$.
\end{remark}

The following proposition will characterise orientation-preserving actions.

\begin{proposition}\label{orientationpreservinglifting}
Suppose we are in the setting of Set-up \ref{setupquotcurve}, and let $P \in C^{\textrm{sing}}$. Then the action of $G$ on $C$ is orientation preserving at $P$ if and only if there exists an isomorphism $\widehat{\O}_{C,P} \cong k[[a,b]]/(ab)$, such that the induced action of $\Stab(P)$ can be lifted to $k[[a,b]]$, in such a way that it stabilises $ab$.
\end{proposition}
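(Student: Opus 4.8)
The plan is to prove both implications by reducing everything to the induced linear action of $\Stab(P)$ on the two-dimensional cotangent space $\mathfrak{m}_P/\mathfrak{m}_P^2 = \langle \bar a, \bar b\rangle$, on which the node is recorded by the class of $ab$ as the split quadratic form $\bar a\,\bar b \in \mathrm{Sym}^2(\mathfrak{m}_P/\mathfrak{m}_P^2)$. The key observation throughout is that, since $ab \in \mathfrak{m}_P^2$, the cotangent space of $k[[a,b]]$ at the origin is canonically identified with $\mathfrak{m}_P/\mathfrak{m}_P^2$, and under this identification the matrix $M_\sigma$ by which a lift $\tilde\sigma$ acts on the cotangent space is exactly the matrix of $\sigma$ acting on $\mathfrak{m}_P/\mathfrak{m}_P^2$.

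For the ``if'' direction I start from a lift $\tilde\sigma$ of each $\sigma \in \Stab(P)$ to $k[[a,b]]$ with $\tilde\sigma(ab) = ab$. Reducing modulo $\mathfrak{m}^3$, the element $\tilde\sigma$ fixes the class of $ab$ in $\mathrm{Sym}^2(\mathfrak{m}_P/\mathfrak{m}_P^2)$, i.e.\ the induced action of $M_\sigma$ fixes the quadratic form $\bar a\,\bar b$. Analysing which elements of $\GL_2$ fix the split form $xy$, one finds $M_\sigma$ is either diagonal, hence of determinant $1$ and fixing both branches, or antidiagonal, hence of determinant $-1$ and swapping them. In either case $\det M_\sigma = \mathrm{sgn}_P(\sigma)$, which is exactly the orientation-preserving condition at $P$.

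For the ``only if'' direction I first linearise the action. Since $p \nmid |\Stab(P)|$, I may, by Remark \ref{localdescriptioncover} (or by averaging over $\Stab(P)$), choose the isomorphism $\widehat{\O}_{C,P} \cong k[[a,b]]/(ab)$ so that every $\sigma$ acts linearly: diagonally, $a \mapsto \lambda_\sigma a,\ b \mapsto \mu_\sigma b$, when $\sigma$ fixes the two branches, and antidiagonally, $a \mapsto \lambda_\sigma b,\ b \mapsto \mu_\sigma a$, when $\sigma$ swaps them. Orientation-preservation says $\det = \mathrm{sgn}_P(\sigma)$, which in both the diagonal and the antidiagonal case simplifies to the single relation $\lambda_\sigma \mu_\sigma = 1$. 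I then \emph{define} the lift $\tilde\sigma$ on $k[[a,b]]$ by the very same linear formulas; as this is just the cotangent representation, $\sigma \mapsto \tilde\sigma$ is a homomorphism lifting the given action modulo $ab$, and $\tilde\sigma(ab) = \lambda_\sigma \mu_\sigma\, ab = ab$, as required.

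The main obstacle is the linearisation step: producing a single choice of coordinates in which \emph{all} of $\Stab(P)$, including branch-swapping elements, acts linearly while preserving the relation $ab = 0$. I will handle this by linearising the branch-preserving normal subgroup $H \subset \Stab(P)$ of index at most two on each branch separately via averaging, and then fixing a swapping element $g$ and declaring the coordinate on the second branch to be $g(a)$; the relations $hg = g\,(g^{-1}hg)$ with $g^{-1}hg \in H$, together with $g^2 \in H$, then force every element to act linearly in these coordinates. Once linearity is in place, the remaining content is the purely formal translation between the determinant condition and the relation $\lambda_\sigma\mu_\sigma = 1$.
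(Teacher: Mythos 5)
Your proof is correct and follows essentially the same route as the paper: both directions reduce to the observation that a (linearised) automorphism preserving $ab$ must act diagonally or antidiagonally on the cotangent space, with determinant $+1$ or $-1$ matching the branch permutation, which is exactly the orientation-preserving condition. The only difference is that you supply your own proof of the linearisation step (averaging on the branch-fixing subgroup, then transporting the coordinate by a swapping element), whereas the paper imports this normal form directly from Remark \ref{localdescriptioncover}; your version is a valid, self-contained substitute.
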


\begin{proof}
If the action of $\Stab(P)$ can be lifted in the way described, then the action of each $\sigma \in \Stab(P)$ on the cotangent space has determinant equal to 1 if $\sigma$ maps $a$ to a multiple of $a$ and $b$ to a multiple of $b$, and determinant $-1$ otherwise. Now, let us prove the converse.

Suppose that the action of $G$ on $P$ is orientation preserving at $P$. By Remark \ref{localdescriptioncover}, we can find an isomorphism $\widehat{\O}_{C,P} \cong k[[a,b]]/(ab)$, such that each $\sigma \in \Stab(P)$ acts on $k[[a,b]]/(ab)$ by $a \mapsto \mu_{\sigma,a} \cdot a$ and $b \mapsto \mu_{\sigma,b} \cdot b$ in case $\mathrm{sgn}_P(\sigma) = 1$ (or vice versa in case $\mathrm{sgn}_P(\sigma) = -1$), for certain roots of unity $\mu_{\sigma,a}, \mu_{\sigma,b} \in k$. Then, the determinant of the action of $\sigma$ on the cotangent space is $\mathrm{sgn}_P(\sigma) \cdot \mu_{\sigma, a} \cdot \mu_{\sigma, b}$, which is $\mathrm{sgn}_P(\sigma)$ by assumption. Hence, in both cases we have $\mu_{\sigma,a} \cdot \mu_{\sigma,b} = 1$.

Then we can lift the action of $\sigma$ on $k[[a,b]]/(ab)$ to $k[[a,b]]$ by making the choice $a \mapsto \mu_{\sigma, a} \cdot a$ and $b \mapsto \mu_{\sigma, b} \cdot b$ in case $\mathrm{sgn}_P(\sigma) = 1$ (or vice versa in case $\mathrm{sgn}_P(\sigma) = -1$). Under this action, the element $ab$ is mapped to $\mu_{\sigma, a} \cdot \mu_{\sigma, b} \cdot ab = ab$, as desired.
\end{proof}

Now we get to the main result of this section.  We will prove that we can deform $\gamma$ to a cover of smooth curves in the following sense. 

\begin{proposition}\label{coverdeformation}
Suppose we are in the situation of Set-up \ref{setupquotcurve} and that the action of $G$ on $C$ is orientation preserving. Over the complete discrete valuation ring $R = k[[X]]$, with residue field $k$, there exists a $G$-Galois cover $\Gamma: \mathcal{C} \rightarrow \mathcal{D}$ of semi-stable curves over $\Spec{R}$ such that $\Gamma_s \cong \gamma$, and $\mathcal{C}_{\eta}$ and $\mathcal{D}_{\eta}$ are smooth, where $\eta$ and $s$ are the generic and special point of $\Spec{R}$, respectively.
\end{proposition}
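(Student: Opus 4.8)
The plan is to construct the deformation locally at each singular point and then glue, using the orientation-preserving hypothesis to control the local lifts and the fact that $p \nmid |G|$ to make all relevant obstructions vanish. First I would deform the quotient curve $D$ downstairs. Pick a semi-stable curve $\mathcal{D}$ over $R = k[[X]]$ with special fibre $D$ whose generic fibre is smooth; concretely, smooth out each node of $D$ by replacing the local model $k[[s,t]]/(st)$ with the standard smoothing $R[[s,t]]/(st - X^N)$ for a suitable integer $N$. The exponent $N$ at a node $P \in D^{\mathrm{sing}}$ will have to be chosen compatibly with the ramification of $\gamma$ above $P$: referring to the local description in Remark \ref{localdescriptioncover}, a node $Q \in C$ over a node $P \in D$ has local model $k[[a,b]]/(ab)$ with $s \mapsto a^m$, $t \mapsto b^m$ where $m = |\Stab(Q)|$, so to admit a degree-$m$ equivariant local cover the smoothing parameter downstairs must be the $m$-th power of the one upstairs. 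I would therefore smooth $C$ with $ab = X$ and $D$ with $st = X^m$ at such a node, and $ab = X$ with the smooth point of $D$ deformed trivially (or via $s \mapsto a^m + b^m$ with $ab = X$) in the case where $P$ is a smooth point.

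The central step is the equivariant local deformation. At each node $Q \in C^{\mathrm{sing}}$, Proposition \ref{orientationpreservinglifting} lets me choose coordinates $\widehat{\O}_{C,Q} \cong k[[a,b]]/(ab)$ so that the $\Stab(Q)$-action lifts to $k[[a,b]]$ fixing $ab$, with each $\sigma$ acting diagonally by $a \mapsto \mu_{\sigma,a}\,a$, $b \mapsto \mu_{\sigma,b}\,b$ (possibly swapping $a$ and $b$) and $\mu_{\sigma,a}\mu_{\sigma,b} = 1$. This is exactly the feature that makes the deformation $ab = X$ carry an action of $\Stab(Q)$ over $R$: since the lift fixes $ab$, it fixes $X$, so $\sigma$ acts $R$-linearly on $R[[a,b]]/(ab - X)$. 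The same diagonal formulas then define the local cover $\Gamma$ over $R$: downstairs I set $s = a^m$, $t = b^m$, and one checks $s$, $t$ are $\Stab(Q)$-invariant (using $\mu_{\sigma,a}^m = 1$) with $st = (ab)^m = X^m$, matching the smoothing of $D$ chosen above; in the smooth-image case one sets $s = a^m + b^m$ and verifies invariance identically. Thus each local piece deforms equivariantly, and at the smooth points of $C$ the cover is étale and deforms uniquely and trivially.

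The remaining step is to glue the local deformations into a global $G$-cover $\Gamma : \mathcal{C} \to \mathcal{D}$ and to verify the Galois-cover axioms of Definition \ref{def:Galoiscover}. Here I expect to lean on the group-action deformation theory of semi-stable curves (Saidi, \cite{Saidi}, as flagged in the introduction): because $p \nmid |G|$ the cover is tame, the local-to-global obstruction to extending the pointwise local deformations lives in a cohomology group that is killed by $|G|$ and hence vanishes, so the equivariant local deformations glue to a global semi-stable deformation $\mathcal{C}/R$ carrying a $G$-action restricting to the given one on the special fibre. I would then take $\mathcal{D} := \mathcal{C}/G$ using Proposition \ref{quotientofcurves} (applied fibrewise, or its evident relative version over $R$), which gives $\Gamma_s \cong \gamma$ by construction and identifies $\mathcal{D}$ with the downstairs smoothing built in the first step. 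Genericity of smoothness follows because every node of $C$ and of $D$ has been smoothed by a nonzero power of $X$, so both $\mathcal{C}_\eta$ and $\mathcal{D}_\eta$ are smooth over $\eta$. The main obstacle is the gluing: reconciling the explicit local diagonal models with a genuine global equivariant deformation requires checking that the local smoothing parameters are compatible across the curve and that the global obstruction class indeed vanishes in the tame setting — this is where the hypothesis $p \nmid |G|$ does the essential work, and where I would invoke \cite{Saidi} rather than reprove the deformation-theoretic machinery.
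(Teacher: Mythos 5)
Your proposal follows essentially the same route as the paper: first fix a smoothing $\mathcal{D}$ of $D$ with local model $st = X^m$ at each node (with $m$ the local ramification order), then use Proposition \ref{orientationpreservinglifting} to produce the explicit equivariant local lifts $ab = X$, $s \mapsto a^m$, $t \mapsto b^m$ (resp.\ $s \mapsto a^m + b^m$ over smooth image points), and finally invoke Sa\"idi's equivariant deformation result to glue these local models into a global $G$-cover over $k[[X]]$, with smoothness of the generic fibres coming from the fact that every node is smoothed by a nonzero power of $X$. The only cosmetic difference is that you recover $\mathcal{D}$ as $\mathcal{C}/G$ at the end rather than lifting the cover over a fixed $\mathcal{D}$, which is what the cited result of Sa\"idi does directly.
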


\begin{proof}
First we shall consider the local structure of the map $\gamma$ above the singular points of $D$ in order to define the scheme $\mathcal{D}$ using deformation theory of stable curves. The curve $D$ is semi-stable and, as $k$ is algebraically closed, we can consider $D$ as a stable curve by marking some extra points on the components with low genus if necessary.

In Remark \ref{localdescriptioncover}, we already saw a description of the completed local rings above the singular points of $D$. Now we take $\mathcal{D} / \Spec{R}$ to be any deformation of $D$ such that for each singular point $P \in D^{\mathrm{sing}}$ as above, we have $\widehat{\O}_{\mathcal{D},P} \cong k[[s,t,X]]/(st - X^m)$, using \cite[Prop.\ 4.37, p.\ 117]{Bertin13}, where $m$ is as in Remark \ref{localdescriptioncover}.

To lift $\gamma$ we will use \cite[Prop.\ 1.2.4, p.\ 8]{Saidi}. Let $Z := D^{\mathrm{sing}} \cup D^{\mathrm{ram}}$, where $D^{\mathrm{ram}}$ is the locus where $\gamma$ is ramified. For each $P \in Z$, we will describe a Galois cover $\mathcal{C}_P \rightarrow \mathop\mathrm{Spf} \widehat{\O}_{\mathcal{D},P}$ lifting the cover $\widehat{C}_P \rightarrow \Spec{\widehat{\O}_{D,P}}$, where $\widehat{C}_P$ is the completion of $C$ above $P$. We will consider three cases.

\begin{itemize}
\item[(i)]
If both $P$ and the points $Q \in \gamma^{-1}(P)$ are smooth, then for each $Q$, the local cover $\widehat{\gamma}_Q$ is, after composition with an isomorphism if necessary, of the form $$k[[s]] \rightarrow k[[a]] : \quad s \mapsto a^m,$$ for some $m$ not divisible by $p$. The group $\mathrm{Stab}(Q)$ is acting as multiplication with powers of $m$-th roots of unity. Then we lift this part of the cover to $k[[X]]$ by taking the map
$$k[[s,X]] \rightarrow k[[a,X]] : \quad s \mapsto a^m, \quad X \mapsto X.$$
This is again a Galois cover (for the group $\mathrm{Stab}(Q)$) and for $\widehat{C}_P$ we take a disjoint union of these, one for each $Q \in \gamma^{-1}(P)$, or equivalently one for each element $g \in G/\mathrm{Stab}(Q)$, letting the Galois group act in the natural way.
\item[(ii)]
If both $P$ and the points of $Q \in \gamma^{-1}(P)$ are singular, then we saw in Remark \ref{localdescriptioncover} that for each $Q$, $\widehat{\gamma}_Q$ is of the form $$k[[s,t]]/(st) \rightarrow k[[a,b]]/(ab): \quad s \mapsto a^m, \quad t \mapsto b^m,$$ for some $m$ not divisible by $p$. The group $\mathrm{Stab}(Q)$ is acting by multiplication with powers of $m$-th roots of unity. We lift this part of the cover to $k[[X]]$ by taking the map
\begin{align*}k[[s,t,X]] / (st - X^m) &\rightarrow k[[a,b,X]] / (ab - X)\\
s \mapsto a^m, \quad &t \mapsto b^m, \quad X \mapsto X,\end{align*}
observing that we use Proposition \ref{orientationpreservinglifting} in order to make this lift. We then continue as in case (i), taking multiple copies of this cover for $\widehat{C}_P$.
\item[(iii)]
Finally, if $P$ is smooth but the points $Q \in \gamma^{-1}(P)$ are singular, then we saw in Remark \ref{localdescriptioncover} that for each $Q$, $\widehat{\gamma}_Q$ is, after composition with an isomorphism if necessary, of the form $$k[[s]] \rightarrow k[[a,b]]/(ab) : \quad s \mapsto a^m + b^m,$$ for some $m$ not divisible by $p$. The group $\mathrm{Stab}(Q)$ is acting by swapping $a$ and $b$ and multiplication with powers of $m$-th roots of unity. We lift this part of the cover to $k[[X]]$ by taking the map
$$k[[s,X]] \rightarrow k[[a,b,X]] / (ab - X) : s \mapsto a^m+b^m, \quad X \mapsto X,$$
again using Proposition \ref{orientationpreservinglifting}. We then continue as in case (i), taking multiple copies of this cover for $\widehat{C}_P$.
\end{itemize}

Using \cite[Prop.\ 1.2.4, p.\ 8]{Saidi}, we find a $G$-Galois cover $\mathcal{C} \rightarrow \mathcal{D}$ of schemes over $\Spec{(k[[X]])}$, such that the special fibre is $\gamma$ and locally above points $P$ as above the cover is described by maps
$$\widehat{\pi}_P : \widehat{C}_P \rightarrow \Spec{\widehat{\O}_{D,P}}$$
that we defined in the previous paragraphs. Smoothness of $\mathcal{C}_{\eta}$ and $\mathcal{D}_{\eta}$ over $k((X))$ follows from the following lemma. 
\end{proof}

\begin{lemma}
Let $\mathcal{C} \rightarrow S := \Spec{(k[[X]])}$ be proper, surjective and flat, such that the special fibre $\mathcal{C}_X$ is a semi-stable curve. Assume that for all $P \in \mathcal{C}_X$, which are singular in the special fibre, we have an isomorphism of $k[[X]]$-algebras $$\widehat{\O}_{\mathcal{C},P} \cong k[[X, a, b]] / (ab - X^n),$$ for some $n$ (depending on $P$), then $\mathcal{C}_{\eta} \rightarrow \eta$ is smooth.
\end{lemma}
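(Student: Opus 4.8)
The plan is to separate the generic fibre into the part lying over the smooth locus of the special fibre and the part clustering around the finitely many nodes of $\mathcal{C}_X$. First I would record that $\mathcal{C} \to S$ is of finite presentation (being proper, it is of finite type over the Noetherian base $S$) and flat by hypothesis, and let $U \subseteq \mathcal{C}$ denote the open locus where $\mathcal{C} \to S$ is smooth; I want to show $\mathcal{C}_\eta \subseteq U$. At every point $x$ where $\mathcal{C}_X$ is smooth over $k$, the fibrewise criterion for smoothness (a flat, finitely presented morphism is smooth at $x$ as soon as its fibre through $x$ is) gives $x \in U$, so $\mathcal{C}_X \setminus \{P_1,\dots,P_r\} \subseteq U$, where $P_1,\dots,P_r$ are the nodes. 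Since smoothness may be checked at closed points, it suffices to treat a closed point $x$ of $\mathcal{C}_\eta$; its closure in $\mathcal{C}$ is a horizontal curve meeting $\mathcal{C}_X$, and if it meets $\mathcal{C}_X$ at a non-node $y \in U$ then $x \in U$ as well, because $U$ is open and hence stable under generization. This reduces everything to the points of $\mathcal{C}_\eta$ whose closure meets $\mathcal{C}_X$ only in the nodes.

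It then remains to treat a node $P = P_i$, where by hypothesis $\widehat{\O}_{\mathcal{C},P} \cong k[[X,a,b]]/(ab - X^n)$. Passing to the generic fibre amounts to inverting $X$, and the crucial observation is that on $\{X \neq 0\}$ the product $ab = X^n$ is a unit, so $a$ and $b$ are themselves units and in particular never vanish simultaneously. Regarding $X$ as an element of the base field $k((X))$, the relation $f = ab - X^n$ has fibre differential $df = b\,da + a\,db$, whose coefficient vector $(b,a)$ is nonzero at every point of $\{X \neq 0\}$. By the Jacobian criterion this makes the generic fibre smooth of relative dimension $1$ near $P$; equivalently $\Omega$ of the generic fibre is locally free of rank $1$ there, since $b\,da + a\,db = 0$ together with the invertibility of $a$ lets one eliminate $db$.

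The point demanding care — and the one I would regard as the main obstacle — is twofold: the field $k((X))$ is imperfect, so regularity and smoothness of $\mathcal{C}_\eta$ need not coincide, and the hypothesis furnishes only the \emph{completed} local ring. The Jacobian criterion is precisely the tool that is insensitive to imperfectness, because differentiating solely in the fibre directions $a,b$ detects geometric regularity rather than mere regularity; but it applies to finite-type algebras, whereas $\widehat{\O}_{\mathcal{C},P}$ is complete. I would bridge this by replacing $\widehat{\O}_{\mathcal{C},P}$ with a finite-type model: as $\O_{\mathcal{C},P}$ is essentially of finite type over the excellent Henselian ring $k[[X]]$ and has completion $k[[X,a,b]]/(ab-X^n)$, Artin approximation provides an \'etale neighbourhood of $P$ in $\mathcal{C}$ that is also \'etale over $\Spec k[[X]][a,b]/(ab-X^n)$. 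Since smoothness is \'etale-local on the source, it suffices to prove that the generic fibre of this explicit finite-type model is smooth, which is exactly the Jacobian computation above carried out over $k((X))$. Justifying this passage from the formal data to a finite-type model is the step I expect to require the most attention.

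Combining the two regions, every closed point of $\mathcal{C}_\eta$ lies in $U$; as $U$ is open and $\mathcal{C}_\eta$ is Jacobson, the whole generic fibre lies in $U$. Thus the non-smooth locus of $\mathcal{C} \to S$ is contained in the special fibre, and $\mathcal{C}_\eta \to \eta$ is smooth.
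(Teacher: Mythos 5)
Your proof is correct, and its overall skeleton matches the paper's: both arguments split $\mathcal{C}_\eta$ according to whether a point specialises to a smooth point or to a node of $\mathcal{C}_X$, dispose of the first case by openness of the smooth locus, and then exploit the local equation $ab - X^n$ at the nodes. Where you differ is in how the node case is justified. The paper simply invokes the deformation theory of nodal singularities from Deligne--Mumford (together with a lemma of Harris--Morrison guaranteeing that $\mathcal{C}_\eta$ is again semi-stable): the locus where the point of $ab = X^n$ remains a node is cut out by $X^n = 0$, so the generic fibre avoids it. You instead run the Jacobian criterion directly on the model $k((X))[a,b]/(ab-X^n)$, observing that $a$ and $b$ become units once $X$ is inverted so the differential $b\,da + a\,db$ never vanishes, and you use Artin approximation to replace the completed local ring by an \'etale-local finite-type model on which that criterion legitimately applies. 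Your route is more self-contained and makes explicit two points the paper leaves implicit: that $k((X))$ is imperfect (so one must certify smoothness rather than mere regularity, which the fibre-direction Jacobian does) and that the hypothesis only provides formal local data (which Artin approximation converts into \'etale-local algebraic data). The cost is a longer argument relying on excellence of $k[[X]]$ and Artin's theorem, where the paper gets by with a citation to the standard miniversal deformation of the node.
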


\begin{proof}
By using \cite[Lem.\ 3.34, p.\ 102]{HarrisMorrison}, we find that $\mathcal{C}_{\eta}$ is a semi-stable curve. As the smooth locus is open, we know that $\mathcal{C}_{\eta}$ is smooth in the points specialising to smooth points of $\mathcal{C}_X$. To check that $\mathcal{C}_{\eta}$ is smooth in the points specialising to a singular point $P$ of $\mathcal{C}_X$, we recall the deformation theory of nodal singularities from \cite{DeligneMumford}. Since we have
$$\widehat{\O}_{\mathcal{C},P} \cong k[[X,a,b]] / (ab-X^n),$$
we see that $X^n = 0$ is the locus where the point is nodal, hence the points in the generic fibre $\mathcal{C}_{\eta}$ specialising to $P$ are smooth.
\end{proof}

\section{Proof of Theorem}

Now we can use the results from the previous section to construct smooth ordinary curves.

\begin{proof}(Theorem, p.\ \pageref{curvedeform})
Take a deformation $\mathcal{C} \rightarrow \mathcal{D}$ as in Proposition \ref{coverdeformation}. Then, if $C$ is ordinary, also $\mathcal{C}_{\eta}$ is ordinary, as the $p$-rank is lower semi-continuous, cf.\ \cite[sect.\ 2]{FaberGeer} and \cite[Th.\ 2.3.1]{Katz}.
\end{proof}

\begin{remark}\label{rmk:Fpbar}
Using an argument like Harbater's, see \cite[Thm.\ 2.7, p.\ 376]{IGT}, we can find an ordinary $G$-Galois cover over $k$ instead of the much larger field $k((X))$. The idea is that $\mathcal{C}_{\eta}$ is defined over a finitely generated algebra over $k$. By Bertini-Noether, the locus where the curves are irreducible is Zariski open and dense. Moreover, the ordinary and the smooth locus are also Zariski open and dense. Hence, as $k$ was assumed to be algebraically closed, we can find a point to specialise to in order to find an ordinary $G$-Galois cover of smooth curves over $k$.
\end{remark}

\section{Examples of ordinary curves}

In this section, we will apply the theory developed in the previous chapter to construct examples of ordinary curves, which are Galois covers of some other curve. We will treat the following examples.

\begin{tabular}{c|c}
Galois group	&Galois cover of \\ \hline
$C_2$ (arbitrary genus hyperelliptic curve)	&projective line	\\
gen.\ by two elements, one of order 2, one of higher order	&elliptic curve	\\
$G \rtimes C_2$ for abelian groups $G$ as above	&projective line	\\
$D_n$	&projective line \\ 
$A_5$ &projective line \\
\end{tabular}

As a first example, we will show that there exist ordinary hyperelliptic curves of arbitrary genus in odd characteristic. The following reproves a result of Glass and Pries (\cite[Thm. 1, sect.\ 2, p.\ 301]{GlassPries}) using the tools that we developed.

\begin{example}\label{hyperellipticresult}
Let $p > 2$ be a prime number and let $g \geq 1$ an integer. Then there exists an ordinary hyperelliptic curve of genus $g$ over $k = \overline{\F}_p$.
\end{example}

\begin{proof}
We will prove this statement by induction on the genus. For $g=1$, in order to obtain an ordinary elliptic curve, take two copies of $\P^1$ and glue them in two points to obtain a curve $C$. Let $G := \Z/2\Z$ act on $C$ by swapping the two components. Then the quotient $D$ of $C$ by $G$ is isomorphic to $\P^1$ by Proposition \ref{quotientofcurves}. We deform this, using the main Theorem (p.\ \pageref{curvedeform}). The resulting curve $\mathcal{C}/\overline{\F}_p[[X]]$ is a $G$-cover of $\mathcal{D} = \P^1$. As the arithmetic genus is constant on flat families, the generic fibre of $\mathcal{C}$ gets the structure of an ordinary elliptic curve (by choosing any point as origin) over $\overline{\F}_p((X))$. Using Remark \ref{rmk:Fpbar}, we can use it to obtain an ordinary elliptic curve over $\overline{\F}_p$.

Now suppose that we managed to obtain an ordinary hyperelliptic curve $H/\overline{\F}_p$ of genus $\ell-1 \geq 1$. We will construct an ordinary hyperelliptic curve of genus $\ell$ out of this.

Take an ordinary elliptic curve $E$ over $\overline{\F}_p$. Let $\varphi_H : H \rightarrow \P^1$ and $\varphi_E : E \rightarrow \P^1$ be the associated 2:1-covers to $\P^1$. Pick points $P \in E(\overline{\F}_p)$ and $Q \in H(\overline{\F}_p)$ such that $\varphi_E$ (resp.\ $\varphi_H$) is ramified at $P$ (resp.\ $Q$). Let $C$ be the curve obtained by gluing $E$ and $H$ in $P$ and $Q$ respectively. Let $D$ be the curve obtained by gluing $\P^1$ and $\P^1$ in $\varphi_E(P)$ and $\varphi_H(Q)$, respectively.

Then there is a natural map $\varphi : C \rightarrow D$ and the action of $G := \Z/2\Z$ on $E$ and $H$ extends to $C$, giving $\varphi$ the structure of a $G$-Galois cover. Moreover, $C$ is ordinary as both $E$ and $H$ are, cf.\ Proposition \ref{ordinarycomponents}. Now the main Theorem (p.\ \pageref{curvedeform}) will give us a curve over $\overline{\F}_p[[X]]$, whose generic fibre will be a hyperelliptic curve of genus $\ell$ (as the arithmetic genus is constant in flat families) over $\overline{\F}_p((X))$. Using Remark \ref{rmk:Fpbar}, we then obtain an ordinary hyperelliptic curve of genus $\ell$ over $\overline{\F}_p$.
\end{proof}

\begin{example} \label{groupgenbytwoelems}
Let $G$ be a finite group, generated by two elements, of which one has order 2 and the other has order greater than 2 (e.g.\ $G = S_n$ or $G = D_n$ for $n \geq 3$). Let $p$ be a prime number coprime to $|G|$. Then there exists an elliptic curve $E$ over $\overline{\F}_p$ and a Galois cover $C \rightarrow E$ with group $G$ of smooth curves over $\overline{\F}_p$ such that $C$ is ordinary.
\end{example}

\begin{center}
\begin{tikzpicture}[line cap=round,line join=round,>=triangle 45,x=3.0cm,y=3.0cm]
\clip(0.2,1.2) rectangle (2.8,3.8);
\draw (1.5,2.)-- (1.215714836897041,1.4974723221713448);
\draw (1.215714836897041,1.4974723221713448)-- (1.7930591535529383,1.502537987919507);
\draw (1.7930591535529383,1.502537987919507)-- (1.5,2.);
\draw (0.5015367456649152,2.209661116220188)-- (1.,2.5);
\draw (1.,2.5)-- (0.4993275237727228,2.786512399217343);
\draw (0.4993275237727228,2.786512399217343)-- (0.5015367456649152,2.209661116220188);
\draw (1.5,3.)-- (1.792451388353112,3.5034444221143466);
\draw (1.792451388353112,3.5034444221143466)-- (1.2102300352319553,3.5049925427429973);
\draw (1.2102300352319553,3.5049925427429973)-- (1.5,3.);
\draw (2.,2.5)-- (2.4989232487301045,2.78762277784977);
\draw (2.4989232487301045,2.78762277784977)-- (2.4985502566900024,2.211731180985951);
\draw (2.4985502566900024,2.211731180985951)-- (2.,2.5);
\draw (1.7930591535529383,1.502537987919507)-- (2.4985502566900024,2.211731180985951);
\draw (2.4989232487301045,2.78762277784977)-- (1.792451388353112,3.5034444221143466);
\draw (1.2102300352319553,3.5049925427429973)-- (0.4993275237727228,2.786512399217343);
\draw (0.5015367456649152,2.209661116220188)-- (1.215714836897041,1.4974723221713448);
\draw (1.5,2.)-- (1.5,3.);
\draw (1.,2.5)-- (2.,2.5);
\begin{scriptsize}
\draw [fill=black] (1.,2.5) circle (1.5pt);
\draw [fill=black] (0.5015367456649152,2.209661116220188) circle (1.5pt);
\draw [fill=black] (1.5,3.) circle (1.5pt);
\draw [fill=black] (1.2102300352319553,3.5049925427429973) circle (1.5pt);
\draw [fill=black] (1.5,2.) circle (1.5pt);
\draw [fill=black] (1.7930591535529383,1.502537987919507) circle (1.5pt);
\draw [fill=black] (2.4985502566900024,2.211731180985951) circle (1.5pt);
\draw [fill=black] (2.,2.5) circle (1.5pt);
\draw [fill=black] (0.4993275237727228,2.786512399217343) circle (1.5pt);
\draw [fill=black] (1.792451388353112,3.5034444221143466) circle (1.5pt);
\draw [fill=black] (1.215714836897041,1.4974723221713448) circle (1.5pt);
\draw [fill=black] (2.4989232487301045,2.78762277784977) circle (1.5pt);
\draw[color=black] (1.07,2.6) node {id};
\draw[color=black] (0.32,2.225) node {(123)};
\draw[color=black] (1.74,3.) node {(13)(24)};
\draw[color=black] (1.2,3.6) node {(234)};
\draw[color=black] (1.74,2.) node {(14)(23)};
\draw[color=black] (1.78,1.4) node {(124)};
\draw[color=black] (2.64,2.2) node {(143)};
\draw[color=black] (1.85,2.6) node {(12)(34)};
\draw[color=black] (0.32,2.8) node {(132)};
\draw[color=black] (1.8,3.6) node {(142)};
\draw[color=black] (1.2,1.4) node {(134)};
\draw[color=black] (2.64,2.76) node {(243)};
\end{scriptsize}
\end{tikzpicture}
\\
{\em An illustration of the graph constructed in the proof of Example \ref{groupgenbytwoelems} with $G = A_4$ and generators $h_1 = (12)(34)$ and $h_2 = (123)$.}
\end{center}

\begin{proof}
Let $h_1, h_2$ be two generators of $G$, of which $h_1$ is of order 2 and $h_2$ of higher order. Then we can consider the graph $\Gamma$, whose vertices are elements $g \in G$ and for each vertex $g \in G$, the edge end set $E_g$ consists of four elements, one of which being $\emptyset_g$, such that the other three edge ends are connected to edge ends of $gh_1$, $gh_2$ and $gh_2^{-1}$, respectively. This graph has the property that every vertex is connected to exactly three other vertices. The group $G$ acts on the graph by 
$$G \rightarrow \mathrm{Aut}(\Gamma) : g \mapsto (h \mapsto gh).$$

Next, we will construct a stable curve $C$ out of this graph. As components we take copies of $\P^1$'s, one for each element $g \in G$, glued together in any arbitrary way such that $\Gamma$ is the graph associated to $C$. For any pair of triples of distinct points in $\P^1$, there is a unique automorphism sending the first triple of points to the second triple. In this way, we can extend the action of $G$ on $\Gamma$ to an action of $G$ on $C$. Using Proposition \ref{quotientofcurves} we find that the quotient $C/G$ is isomorphic to $\P^1$ glued to itself in one point. Now we can use Proposition \ref{coverdeformation} to deform this cover into a $G$-Galois cover of an elliptic curve $E$ by an ordinary smooth curve over $\overline{\F}_p((X))$. Using Remark \ref{rmk:Fpbar}, we obtain the desired cover of smooth curves over $\overline{\F}_p$.
\end{proof}

\begin{remark}
Any non-abelian finite simple group can be realised in this way. Because of the odd order theorem, such a group $G$ has even order. Take any element $h_1$ of order 2. Then there exists, cf.\ \cite{Stein}, another element $h_2$ such that $h_1$ and $h_2$ generate $G$. By taking $h_1 \cdot h_2$ instead of $h_2$ if necessary, we may assume that $h_2$ has order greater than 2.
\end{remark}

\begin{example}\label{abgroupgenbytwoelems}
If in addition to the hypotheses in Example \ref{groupgenbytwoelems} the group $G$ is abelian, let $H$ be the group $G \rtimes C_2$, where $C_2$ acts on $G$ by inverting all elements. Then there exists a Galois cover $C \rightarrow \P^1$ with group $H$, such that $C$ is a smooth ordinary curve over $\overline{\F}_p$.
\end{example}

\begin{proof}
In the case $G$ is abelian, also $$\Gamma \rightarrow \Gamma : g \mapsto g^{-1}$$ is an automorphism of $\Gamma$. Using this automorphism, we obtain an action of $H$ of $\Gamma$. Again the action extends to $C$ and the quotient $C/H$ is isomorphic to $\P^1$. Using Proposition \ref{coverdeformation} and Remark \ref{rmk:Fpbar} again, we get a $H$-Galois cover of $\P^1$ by an ordinary smooth curve over $\overline{\F}_p$.
\end{proof}

\begin{example}\label{example:ngon}
Let $n$ be an integer and let $p$ be a prime number coprime to $2n$. Then there exists a Galois cover $C \rightarrow \P^1$ with group $D_n$, such that $C$ is a smooth ordinary curve over $\overline{\F}_p$.
\end{example}

\begin{proof}
Consider the regular $n$-gon as a graph $\Gamma$ and let $D_n$ act on it. By gluing $n$ copies of $\P^1$ subsequently in the points $0$ and $\infty$ (gluing the $0$ of one curve to the $\infty$ of the next one), we can construct a semi-stable curve $C$ whose associated graph is $\Gamma$.

\begin{center}
\begin{tikzpicture}[line cap=round,line join=round,>=triangle 45,x=1.0cm,y=1.0cm]
\clip(-1.,-1.) rectangle (5.,5.);
\draw [shift={(-4.,2.)}] plot[domain=-0.7853981633974483:0.7853981633974484,variable=\t]({1.*4.47213595499958*cos(\t r)+0.*4.47213595499958*sin(\t r)},{0.*4.47213595499958*cos(\t r)+1.*4.47213595499958*sin(\t r)});
\draw [shift={(2.,8.)}] plot[domain=5.497787143782138:5.497787143782138,variable=\t]({1.*4.47213595499958*cos(\t r)+0.*4.47213595499958*sin(\t r)},{0.*4.47213595499958*cos(\t r)+1.*4.47213595499958*sin(\t r)});
\draw [shift={(2.,8.)}] plot[domain=3.9269908169872414:5.497787143782138,variable=\t]({1.*4.47213595499958*cos(\t r)+0.*4.47213595499958*sin(\t r)},{0.*4.47213595499958*cos(\t r)+1.*4.47213595499958*sin(\t r)});
\draw [shift={(8.,2.)}] plot[domain=2.356194490192345:3.9269908169872414,variable=\t]({1.*4.4721359549995805*cos(\t r)+0.*4.4721359549995805*sin(\t r)},{0.*4.4721359549995805*cos(\t r)+1.*4.4721359549995805*sin(\t r)});
\draw [shift={(2.,-4.)}] plot[domain=0.7853981633974483:2.356194490192345,variable=\t]({1.*4.47213595499958*cos(\t r)+0.*4.47213595499958*sin(\t r)},{0.*4.47213595499958*cos(\t r)+1.*4.47213595499958*sin(\t r)});
\begin{scriptsize}
\draw [fill=black] (0.,4.) circle (1.5pt);
\draw [fill=black] (0.,0.) circle (1.5pt);
\draw [fill=black] (4.,4.) circle (1.5pt);
\draw [fill=black] (4.,0.) circle (1.5pt);
\end{scriptsize}
\end{tikzpicture}
\\
{\em An example with $n=4$: four copies of $\P^1$ glued in a square form.}
\end{center}

The group $D_n$ acts on $\Gamma$ in a natural way and this action can be extended to $C$, using the automorphism $(x:y) \mapsto (y:x)$ to mirror the sides of the $n$-gon, if necessary. The quotient $C/D_n$ is isomorphic to $\P^1$ (cf.\ Proposition \ref{quotientofcurves}) and by deforming it, using Proposition \ref{coverdeformation} and Remark \ref{rmk:Fpbar}, we find a $D_n$-Galois cover of $\P^1$ by an ordinary smooth curve over $\overline{\F}_p$.
\end{proof}

\begin{example}\label{example:A5}
Let $p > 5$ be a prime number. Then there exists a Galois cover $C \rightarrow \P^1$ with group $A_5$ such that $C$ is a smooth ordinary curve over $\overline{\F}_p$.
\end{example}

\begin{center}
{\begin{tikzpicture}[line cap=round,line join=round,>=triangle 45,x=1.5cm,y=1.5cm]
\clip(-1.4,-1.) rectangle (2.4,2.7);
\draw (-0.5877852522924734,-0.8090169943749473)-- (-1.2600735106701006,1.2600735106701015);
\draw (-1.2600735106701006,1.2600735106701015)-- (0.5,2.538841768587626);
\draw (0.5,2.538841768587626)-- (2.260073510670101,1.2600735106700998);
\draw (2.260073510670101,1.2600735106700998)-- (1.587785252292473,-0.8090169943749477);
\draw (1.587785252292473,-0.8090169943749477)-- (-0.5877852522924734,-0.8090169943749473);
\draw (-0.5877852522924734,-0.8090169943749473)-- (0.,0.);
\draw (0.,0.)-- (0.5,1.5388417685876266);
\draw (0.5,1.5388417685876266)-- (1.,0.);
\draw (1.,0.)-- (-0.30901699437494734,0.9510565162951536);
\draw (-0.30901699437494734,0.9510565162951536)-- (1.3090169943749475,0.9510565162951532);
\draw (1.3090169943749475,0.9510565162951532)-- (0.,0.);
\draw (-0.30901699437494734,0.9510565162951536)-- (-1.2600735106701006,1.2600735106701015);
\draw (0.5,1.5388417685876266)-- (0.5,2.538841768587626);
\draw (1.3090169943749475,0.9510565162951532)-- (2.260073510670101,1.2600735106700998);
\draw (1.,0.)-- (1.587785252292473,-0.8090169943749477);
\begin{scriptsize}
\draw [fill=black] (0.,0.) circle (1.5pt);
\draw [fill=black] (1.,0.) circle (1.5pt);
\draw [fill=black] (1.3090169943749475,0.9510565162951532) circle (1.5pt);
\draw [fill=black] (0.5,1.5388417685876266) circle (1.5pt);
\draw [fill=black] (-0.30901699437494734,0.9510565162951536) circle (1.5pt);
\draw [fill=black] (-0.5877852522924734,-0.8090169943749473) circle (1.5pt);
\draw [fill=black] (1.587785252292473,-0.8090169943749477) circle (1.5pt);
\draw [fill=black] (2.260073510670101,1.2600735106700998) circle (1.5pt);
\draw [fill=black] (0.5,2.538841768587626) circle (1.5pt);
\draw [fill=black] (-1.2600735106701006,1.2600735106701015) circle (1.5pt);
\end{scriptsize}
\end{tikzpicture}
\\
{\em The Petersen graph.}}
\end{center}

\begin{proof}
This time take $\Gamma$ to be the Petersen graph (see the figure). Its automorphism group is $S_5$. We construct the curve $C$ by taking 10 copies of $\P^1$ and glue them arbitrarily to obtain a stable curve, whose associated graph is $\Gamma$. We extend the action of the subgroup $A_5 \subset S_5$ on $\Gamma$ to $C$, like in the previous examples. This action satisfies all the necessary conditions. The quotient $C/A_5$ is $\P^1$ and by deforming again, we find a $A_5$-Galois cover of $\P^1$ by an ordinary smooth curve.
\end{proof}



\end{document}